\newcommand{\kap}{\kappa}
\newcommand{\sig}{\sigma}
\newcommand{\seq}{\subseteq}
\newcommand{\longc}{,\ldots,}
\newcommand{\longe}{=\dotsb=}
\newcommand{\longp}{+\dotsb+}
\newcommand{\longle}{\le\dotsb\le}
\newcommand{\longge}{\ge\dotsb\ge}
\newcommand{\longu}{\cup\dotsb\cup}
\renewcommand{\ge}{\geqslant}
\renewcommand{\le}{\leqslant}
\newcommand{\lpr}{\left(}
\newcommand{\rpr}{\right)}
\newcommand{\lfr}{\left\{}
\newcommand{\rfr}{\right\}}
\newcommand{\lfl}{\left\lfloor}
\newcommand{\rfl}{\right\rfloor}
\newcommand{\lcl}{\left\lceil}
\newcommand{\rcl}{\right\rceil}
\theoremstyle{plain}
\newtheorem{lemma}{Lemma}
\newtheorem{theorem}{Theorem}
\newtheorem{corollary}{Corollary}
\newtheorem{proposition}{Proposition}
\newtheorem{conjecture}{Conjecture}
\newtheorem{primetheorem}{Theorem}
\newcommand{\refl}[1]{~\ref{l:#1}}
\newcommand{\reft}[1]{~\ref{t:#1}}
\newcommand{\refc}[1]{~\ref{c:#1}}
\newcommand{\refj}[1]{~\ref{j:#1}}
\newcommand{\refp}[1]{~\ref{p:#1}}
\newcommand{\refs}[1]{~\ref{s:#1}}
\newcommand{\refb}[1]{~\cite{b:#1}}
\newcommand{\refe}[1]{~\eqref{e:#1}}
\title[Consecutive integers in sumsets]%
  {Consecutive integers \\ in high-multiplicity sumsets}
\author{Vsevolod F. Lev}
\address{Department of Mathematics, The University of Haifa at Oranim,
  Tivon 36006, Israel.}
\email{seva@math.haifa.ac.il}
\thanks{The author gratefully acknowledges the support of the Georgia
Institute of Technology and the Fields Institute, which he was visiting
while conducting his research.}
\begin{document}
\baselineskip=16pt

\begin{abstract}
Sharpening (a particular case of) a result of Szemer\'edi and Vu \refb{sv}
and extending earlier results of S\'ark\"ozy \refb{s} and ourselves
\refb{la}, we find, subject to some technical restrictions, a sharp threshold
for the number of integer sets needed for their sumset to contain a block of
consecutive integers of length, comparable with the lengths of the set
summands.

A corollary of our main result is as follows. Let $k,l\ge 1$ and $n\ge 3$ be
integers, and  suppose that $A_1\longc A_k\seq[0,l]$ are integer sets of size
at least $n$, none of which is contained in an arithmetic progression with
difference greater than $1$. If $k\ge2\lcl(l-1)/(n-2)\rcl$, then the sumset
$A_1\longp A_k$ contains a block of consecutive integers of length $k(n-1)$.
\end{abstract}

\maketitle

\section{Background and summary of results}

The \emph{sumset} of subsets $A_1\longc A_k$ of an additively written
group is defined by
  $$ A_1\longp A_k
               := \{ a_1\longp a_k\colon a_1\in A_1\longc a_k\in A_k \}; $$
if $A_1\longe A_k=A$, this is commonly abbreviated as $kA$. In the present
paper we will be concerned exclusively with the group of integers, in which
case a well-known phenomenon occurs: if all sets $A_i$ are dense, and their
number $k$ is large, then the sumset $A_1\longp A_k$ contains long arithmetic
progressions. There are numerous ways to specialize this statement,
indicating the exact meaning of ``dense'', ``large'', and ``long'', but in
our present context the following result of S\'ark\"ozy is the origin of
things.

\begin{theorem}[\protect{S\'ark\"ozy \cite[Theorem~1]{b:s}}]\label{t:sarkozy}
Let $l\ge n\ge 2$ be integers, and write $\kap:=\lcl(l+1)/(n-1)\rcl$. Then,
for every integer set $A\seq[1,l]$ with $|A|=n$, there exist positive
integers $d\le\kap-1$ and $k<118\kap$ such that the sumset $kA$ contains $l$
consecutive multiples of $d$.
\end{theorem}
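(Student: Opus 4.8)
The plan is to normalize $A$, read off the difference $d$ as the gcd of its gaps, and then run an interval-filling engine on the resulting primitive set, keeping the number of summands linear in $\kappa$. First I would translate $A$ so that $\min A=0$; since containing a block of $l$ terms of an arithmetic progression with difference $d$ is translation-invariant (shifting $A$ by $c$ merely shifts $kA$ by $kc$), this costs nothing. Now $A=\{0=a_1<a_2<\dots<a_n\}\seq[0,l-1]$, with gaps $g_i:=a_{i+1}-a_i$ obeying $g_1\longp g_{n-1}=a_n\le l-1$. Put $d:=\gcd(A)=\gcd(g_1,\dots,g_{n-1})$ and $B:=A/d$, so that $0\in B\seq[0,(l-1)/d]$, $\gcd(B)=1$ and $|B|=n$. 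Because the $n-1$ gaps are positive multiples of $d$ summing to at most $l-1<(n-1)\kappa$, we get $(n-1)d\le l-1$ and hence $d\le\kappa-1$ --- exactly the difference the theorem asks for. Since $kA=d\cdot(kB)$, it suffices to show that $kB$ contains $l$ consecutive integers for some $k<118\kappa$: scaling by $d$ then yields $l$ consecutive multiples of $d$ in $kA$ (genuine multiples for the translate, a block of an arithmetic progression with difference $d$ after undoing the translation).

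The heart of the matter is thus an interval-filling statement for the primitive set $B$: with $m:=\max B$, the iterated sumset $kB$ should contain at least $l$ consecutive integers once $k$ exceeds an absolute multiple of $m/(n-1)$, a quantity that is $O(\kappa)$ since $m/(n-1)\le(l-1)/(n-1)<\kappa$. The clean tool here is the gap representation
\[ kB=\Big\{\,\sum_{j=1}^{n-1}m_jg_j:\ k\ge m_1\ge m_2\longge m_{n-1}\ge0\,\Big\}, \]
obtained by counting, for a choice of $k$ summands, how many lie to the right of each gap (which forces the $m_j$ to be non-increasing). As a warm-up, if $B=\{0,1,\dots,n-1\}$ is itself an interval then $kB=\{0,1,\dots,k(n-1)\}$ already contains $k(n-1)+1\ge l$ consecutive integers as soon as $k\ge(l-1)/(n-1)$, i.e. $k=O(\kappa)$; the gap representation shows more generally that wherever $B$ has a long run of unit gaps, freezing the coordinates $m_j$ outside the run and varying those inside it manufactures consecutive integers at a rate proportional to the length of the run.

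The hard part is the case where $B$ has no two consecutive elements, so every $g_j\ge2$ and no single freezing yields consecutive integers. Here I would invoke $\gcd(g_1,\dots,g_{n-1})=1$ to select a short, coprime combination of gaps and wiggle the corresponding coordinates $m_j$ within the monotonicity constraint, arranging that the gap-sums realize every residue and, together with the coarse coverage supplied by the remaining gaps, every value in a central sub-block. The delicate, and genuinely technical, step is the quantitative estimate: one must bound, uniformly over all primitive $B\seq[0,(l-1)/d]$ of size $n$, the number of copies needed before these monotone gap-sums sweep out a full interval of length $\ge l$, and show it stays below $118\kappa$. This is a Frobenius-type covering bound governed by the gap sizes (all at most $\kappa$) and their number $n-1$; marshalling the losses so that the final constant comes out below $118$ is, I expect, the main obstacle and the source of S\'ark\"ozy's constant.
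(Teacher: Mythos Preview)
The paper does not prove this theorem at all: it is quoted from S\'ark\"ozy's original article purely as background, and the paper's own contribution is the sharper Theorem~\ref{t:lev} (constant $2$ in place of $118$), proved by an entirely different mechanism --- iterated application of the sumset-growth bound of Theorem~\ref{t:sumsetgrowth} together with the pigeonhole Lemma~\ref{l:box} --- rather than by your gap representation. So there is no ``paper's proof'' of the statement as such to compare against.

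That said, your proposal is not a proof but an outline that stops at exactly the point where the content begins. The preliminary reductions are fine: the translation is harmless, the bound $d\le\kappa-1$ follows (since $d\le\lfloor(l-1)/(n-1)\rfloor\le\lfloor l/(n-1)\rfloor=\lceil(l+1)/(n-1)\rceil-1$), and the monotone gap representation of $kB$ is correct. But after that you write, in effect, ``here one would select a coprime combination of gaps, wiggle coordinates, and marshal the losses so the constant comes out below $118$'' --- and this \emph{is} the theorem. You have reduced to the primitive case $\gcd(B)=1$ and then declared the remaining (and only) hard step to be ``the main obstacle'', without supplying an argument or even a concrete bound to iterate. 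A reader cannot reconstruct a proof from what you wrote; the quantitative interval-filling estimate for primitive $B$ with a linear-in-$\kappa$ number of summands is precisely what S\'ark\"ozy's paper establishes, and nothing in your text does so.

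If you want a route that actually closes: the paper's Theorem~\ref{t:lev}, applied to your normalized, primitive $B$, gives a block of length exceeding $l$ in $kB$ as soon as $k\ge 2\lfloor(\ell(B)-1)/(n-2)\rfloor+1$, and since $\ell(B)\le l-1$ this threshold is $O(\kappa)$ with an explicit (and much smaller) constant. That, not the gap representation, is the clean way to recover S\'ark\"ozy's statement from the tools in this paper.
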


In \refb{la} we established a sharp version of this result, replacing the
factor $118$ with $2$ (which is the best possible value, as conjectured by
S\'ark\"ozy) and indeed, going somewhat further.
\begin{theorem}[\protect{Lev \cite[Theorem~1]{b:la}}]\label{t:lev}
Let $n\ge 3$ and $l\ge n-1$ be integers, and write
$\kap:=\lfl(l-1)/(n-2)\rfl$. Then for every integer set $A\seq[0,l]$ with
$0,l\in A,\ \gcd(A)=1$, and $|A|=n$, and every integer $k\ge 2\kap$, we have
  $$ [\kap(2l-2-(\kap+1)(n-2)), kl-\kap(2l-2-(\kap+1)(n-2))] \seq kA. $$
\end{theorem}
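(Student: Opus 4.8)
The plan is to reduce the general statement to the two base values $k=2\kap$ and $k=2\kap+1$, and to settle those by induction on $n$.

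\emph{Reduction to small $k$.} The assertion is invariant under $A\mapsto l-A$, which sends $kA$ to $kl-kA$ and the displayed interval to itself; combined with $0,l\in A$, this allows an induction on $k$, because from $kA\supseteq (k-1)A+\{0,l\}$ one glues the translates $[\kap m,(k-1)l-\kap m]$ and $[\kap m+l,kl-\kap m]$ into $[\kap m,kl-\kap m]$ as soon as the interval produced at step $k-1$ has length at least $l$. Writing $r:=(l-1)-\kap(n-2)\in[0,n-3]$, so that $m=2l-2-(\kap+1)(n-2)=l-n+1+r$, the interval at step $k$ has length $kl-2\kap m+1$; at $k=2\kap+1$ this equals $2\kap(n-1-r)+l+1\ge l+1$, and it only grows afterwards, so it suffices to prove the two base cases $k\in\{2\kap,2\kap+1\}$ outright. (At $k=2\kap$ the claimed interval can itself be shorter than $l$, which is exactly why one cannot bootstrap from that case alone.)

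\emph{The base cases, by induction on $n$.} For $n=3$ one has $A=\{0,a,l\}$ with $\gcd(a,l)=1$ and $kA=\{xa+yl:x,y\ge0,\ x+y\le k\}$; choosing the unique $x\in[0,l)$ with $xa\equiv s\pmod l$ gives $x+y=(s+x(l-a))/l$, and a direct estimate shows $x+y\le k$ for every $s\in[\kap m,kl-\kap m]=[(l-1)(l-2),\,kl-(l-1)(l-2)]$ once $k\ge 2(l-1)=2\kap$ — the constant $(l-1)(l-2)=\kap m$ is exactly $F(\{l-1,l\})+1$, one past the Frobenius number of the extremal pair, and the only subtlety is that the awkward residue class is absent from the bottom of the range. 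For the inductive step, decompose
\[
 kA=\bigcup_{j=0}^{k}\bigl(jl+(k-j)A'\bigr),\qquad A':=A\setminus\{l\},\ |A'|=n-1,\ 0\in A',
\]
put $d:=\gcd(A')$ (so $\gcd(d,l)=1$ since $\gcd(A)=1$) and write $A'=dB$ with $\gcd(B)=1$. By the induction hypothesis each $(k-j)A'=d\cdot(k-j)B$ contains a long block of integers in a single residue class mod $d$ once the multiplicity $k-j$ clears the threshold attached to $B$; and because $\gcd(d,l)=1$, as $j$ runs over $d$ consecutive values the shifts $jl$ realise all residues mod $d$, so $d$ consecutive, geometrically overlapping pieces $jl+(k-j)A'$ amalgamate into a genuine integer interval. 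Chaining these amalgamated blocks across the admissible range of $j$, and invoking $A\mapsto l-A$ for the upper half, produces $[\kap m,kl-\kap m]$.

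\emph{Main obstacle.} The conceptual scaffolding is light; the difficulty is the sharp bookkeeping. First, the threshold for $B$ is governed by $\kap_B=\lfloor(\max B-1)/(n-3)\rfloor$, which need not be $\le\kap$: when the top gap $l-a_{n-2}$ is small one can have $\kap_B>\kap$, so \emph{no} piece $(k-j)A'$ is delivered by the induction hypothesis, and that $A$ must be handled separately — by a bespoke decomposition built from the two largest elements of $A$, or by a sub-induction on the number of points of $[0,l]$ missing from $A$. Second, and this is the crux, one must verify that the overlaps of consecutive pieces, and of consecutive amalgamated blocks, persist on exactly the range of $j$ that is needed, and that the resulting union descends to $\kap m$ and rises to $kl-\kap m$ with nothing to spare; this forces one to carry the precise identities $\kap(n-2)+r=l-1$ and $m=l-n+1+r$, together with their analogues for $B$, through every step, and it is here that the hypothesis $k\ge 2\kap$ is genuinely consumed. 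Producing merely ``an interval of length comparable to $l$'' inside $2\kap A$ would be far easier; landing on the sharp constant $\kap m$ is the heart of the matter.
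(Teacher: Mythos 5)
This theorem is not proved in the present paper at all: it is imported from \cite{b:la}, and the method used there (and mirrored here in the proof of Theorem 4 via Proposition 1 and Lemma 1) is to establish lower bounds of the form $|kA|\ge|(k-1)A|+\min\{l,k(n-2)+1\}$ and then apply a box-principle argument to two halves of the sum, not an induction on $n$. Your outer reduction is fine: the invariance under $A\mapsto l-A$, the gluing $kA\supseteq(k-1)A\cup((k-1)A+l)$, and the computation showing that the interval at $k=2\kappa+1$ is long enough to bootstrap all $k\ge 2\kappa+2$ are all correct, as is the $n=3$ base case (a clean Frobenius-type computation, where the bound $x(l-a)\le(l-1)(l-a)$ and the congruence $xa\equiv s\pmod l$ do force $0\le y$ and $x+y\le k$ on the stated range).

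The genuine gap is the inductive step on $n$, which is where the whole theorem lives and which you do not carry out. The specific failure you flag is not a marginal exception but the typical situation: with $A'=A\setminus\{l\}$ and $B=A'/\gcd(A')$, the relevant threshold $\kappa_B=\lfloor(\ell(B)-1)/(n-3)\rfloor$ is generically \emph{larger} than $\kappa=\lfloor(l-1)/(n-2)\rfloor$ because the denominator drops from $n-2$ to $n-3$ while $\ell(B)$ can stay close to $l$ (e.g.\ $A=\{0\}\cup[l-n+2,l]$). In that case $k-j\ge 2\kappa_B$ fails for every $j\ge0$ when $k=2\kappa$, so the induction hypothesis delivers \emph{no} piece $(k-j)A'$, and the proposed remedies (``a bespoke decomposition built from the two largest elements'' or ``a sub-induction on the number of missing points'') are names for arguments rather than arguments. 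Even in the favourable case the amalgamation of the translates $jl+(k-j)A'$ into a single interval with the exact endpoints $\kappa m$ and $kl-\kappa m$ is asserted, not verified, and you say yourself that ``landing on the sharp constant $\kappa m$ is the heart of the matter.'' So what you have is a correct reduction plus a base case plus a strategy whose central step is known (to you) to break down on a large class of inputs; that is a gap, not a proof. A workable route is the one this paper actually uses for its Theorem 4: prove that $\kappa A$ and $(\kappa+1)A$ occupy more than half of their spans (via the growth estimate of Theorem 5) and then invoke the box principle of Lemma 1, which sidesteps the $\gcd(A')>1$ difficulty entirely.
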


The complicated-looking interval appearing in the statement of Theorem
\reft{lev} is best possible and, in general, cannot be extended even by $1$
in either direction. As the interested reader will easily check, this
interval (strictly) includes $[\kap l,(k-\kap)l]$ as a subinterval;
consequently, if $k\ge 2\kap+1$, then its length exceeds $l$. At the same
time, if $k\le 2\kap$, then the sumset $kA$ may fail to contain a block of
consecutive integers of length $l$; see example below. Thus, $2\kap+1$ is the
smallest value of $k$ such that, with $A$ as in Theorem \reft{lev}, the
sumset $kA$ is guaranteed to contain a block of consecutive integers of
length $l$.

At first sight, Theorem \reft{lev} is weaker than Theorem \reft{sarkozy} in
imposing the extra assumptions $0,l\in A$ and $\gcd(A)=1$. It is explained in
\refb{la}, however, that these assumptions are merely of normalization
nature, and a refinement of Theorem \reft{sarkozy}, with the bound $118\kap$
replaced by $2\kap+2$ (and some other improvements), is deduced from Theorem
\reft{lev} in a relatively straightforward way.

We notice that Theorem \reft{lev} yields a sharp result about the function
$f(n,k,l)$, introduced in \refb{sv}. This function is defined for positive
integers $k$ and $l\ge n\ge 2$ to be the largest number $f$, such that for
every $n$-element integer set $A\seq[1,l]$, the sumset $kA$ contains an
arithmetic progression of length $f$. (The \emph{length} of an arithmetic
progression is the number of its terms, less $1$). As indicated in \refb{sv},
``many estimates for $f(n,k,l)$ have been discovered by Bourgain, Freiman,
Halberstam, Green, Ruzsa, and S\'ark\"ozy''. It is worth noting in this
connection that Theorem \reft{lev} establishes the \emph{exact value} of this
function for $k$ large; namely, it is easy to deduce from Theorem \reft{lev}
(and keeping in mind the trivial example $A=[1,n]$) that
  $$ f(n,k,l)=k(n-1);\quad k\ge 2\lfl(l-2)/(n-2)\rfl+2. $$
This, to our knowledge, remains the only situation where the value of
$f(n,k,l)$ is known precisely.

An obvious shortcoming of Theorem \reft{lev} is that it applies only to
identical set summands. Potentially distinct summand are dealt with by
Szemer\'edi and Vu in \refb{sv}. A particular case of their result, to be
compared with Theorems \reft{sarkozy} and \reft{lev}, is as follows.
\newpage
\begin{theorem}[Szemer\'edi-Vu
      \protect{\cite[particular case of Corollary 5.2]{b:sv}}]\label{t:szvu}
There exist positive absolute constants $C$ and $c$ with the following
property. Suppose that $k,l\ge 1$ and $n\ge 2$ are integers, and
 $A_1\longc A_k\seq[0,l]$ are integer sets, having at least $n$ elements each.
If $k>Cl/n$, then the sumset $A_1\longp A_k$ contains an arithmetic
progression of length at least $ckn$.
\end{theorem}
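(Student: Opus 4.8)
The plan is to normalize, to peel off whatever common divisor most of the summands happen to share, and then to build a long arithmetic progression inside a partial sumset by the classical one-summand-at-a-time iteration; the target length of order $kn$ is produced almost for free in the last stage.

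\emph{Normalization and reduction.} Replacing $A_i$ by $A_i-\min A_i$ only translates $A_1\longp A_k$, so we may assume $0\in A_i\seq[0,l]$ and, discarding surplus elements, $|A_i|=n$. Put $g_i:=\gcd\bigl(A_i-\min A_i\bigr)$; as $A_i$ has $n$ points in $[0,l]$, its smallest gap, hence $g_i$, is at most $D:=\lcl l/(n-1)\rcl$. If some $d\ge2$ divides $g_i$ for all but a small fraction of the indices, retain only those summands, divide them through by $d$, and repeat; since $l$ is halved at each step there are at most $\log_2 l$ steps, and one can keep the deleted fraction small enough that at least half of the summands survive in all. We are thus left with $m\ge k/2$ summands, each with $n$ points in $[0,l']$ for some $l'\le l$, whose local gcds are robustly coprime (no $d\ge2$ divides all but a small fraction of them); note $m\gg l'/n$ because $k>Cl/n$. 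It now suffices to show such a collection has $A_1\longp A_m\supseteq$ a block of $\Omega(mn)$ consecutive integers: translated, this block sits inside the original $A_1\longp A_k$, and undoing the divisions turns it into an arithmetic progression there of the required length.

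\emph{Building and absorbing.} I would grow the block one summand at a time, maintaining an AP $P$ of length $L$ and difference $d'$ inside a partial sumset $A_{i_1}\longp A_{i_t}$. For a fresh summand $A$ one has $P+A\supseteq\bigcup_{a\in A}(P+a)$, and one shows that this step either roughly doubles $L$, or replaces $d'$ by a proper divisor (robust coprimality supplies summands whose residue structure breaks the current class, and $O(\log l)$ of them reduce $d'$ to $1$), or, for a badly aligned $A$, merely reproduces $P$, in which case $A$ is shelved; in all, $O(l/n)$ summands bring $P$ to a block of $\ge l$ consecutive integers. Thereafter every remaining summand has diameter in the range $[\,n-1,\,l\,]$, so $P+A$ is again a block, longer by at least $n-1$; taking $C$ large enough leaves at least half the summands available for this absorption, and we finish with a block of length $\ge\tfrac12 m(n-1)\ge cmn$.

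\emph{Where the difficulty lies.} The substance is the iterative building step, and above all the bookkeeping that keeps the number of ``set-up'' summands down to $O(l/n)$ rather than $O((l/n)\log l)$, while shelving summands trapped in sparse progressions of large difference without corrupting the count or the eventual common difference. This is exactly what Theorem \reft{lev} accomplishes for identical summands, where $\sim2l/n$ copies suffice, and in the present generality it amounts to the Szemer\'edi--Vu long-progression machinery --- equivalently, to the distinct-summand sharpening of Theorem \reft{lev}.
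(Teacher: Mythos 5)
This statement is not proved in the paper at all: it is quoted verbatim from Szemer\'edi and Vu \cite[Corollary 5.2]{b:sv}, so there is no internal argument to compare yours against. Judged on its own terms, your proposal has a genuine gap at its core, and your final paragraph effectively concedes it: the entire content of the theorem is the claim in your ``building and absorbing'' step that $O(l/n)$ summands suffice to turn a short progression into a block of length at least $l$, and you say that this step ``amounts to the Szemer\'edi--Vu long-progression machinery'' --- that is, to the statement being proved. Nothing in the sketch establishes the asserted trichotomy (double $L$, shrink $d'$, or shelve $A$), bounds the number of shelved summands, or verifies the count of set-up summands. The arithmetic of the outline does not even close as stated: a set of $n$ points in $[0,l]$ need not contain an arithmetic progression of more than a few terms, so the initial $P$ is short, and if each step only ``roughly doubles'' $L$ then reaching length $l$ consumes about $\log_2 l$ summands, which exceeds $Cl/n$ whenever $n\ge Cl/\log_2 l$ --- precisely a regime the theorem covers.

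The normalization step is also doing more work than it can bear. Removing the hypothesis that no summand lies in an arithmetic progression of difference greater than $1$ is exactly what Section~\ref{s:conclusion} identifies as the major open difficulty for the sharp results of this paper; your gcd-peeling replaces that hypothesis by ``robust coprimality'' (no $d\ge 2$ divides all but a small fraction of the local gcds), but the subsequent argument never specifies how this weaker property is consumed: the surviving family may still contain large subfamilies each trapped in progressions of large (and differing) differences, and the mechanism by which $O(\log l)$ well-chosen summands reduce the working difference $d'$ to $1$ without exhausting the budget is asserted, not proved. If you want a self-contained proof of a statement of this type using the tools of this paper, the available route is Theorem~\ref{t:main} and Corollary~\ref{c:main}, which give the sharp conclusion but only \emph{under} the coprimality-type hypothesis; dropping that hypothesis is precisely what is missing both from your sketch and, as the author notes, from the present state of the art short of the full argument of \cite{b:sv}.
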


Though the proof of Theorem \reft{szvu}, presented in \refb{sv}, is
constructive, the constants $C$ and $c$ are not computed explicitly. Indeed,
the argument leads to excessively large values of these constants, which may
present a problem in some applications.

The goal of this note is to merge the best of the two worlds, extending
Theorem \reft{lev} onto distinct set summands, or, equivalently, proving a
sharp analogue if Theorem \reft{szvu}, with $Cl/n$ and $ckn$ replaced with
best possible expressions. A result of this sort (up to a technical
restriction, addressed below) is an almost immediate corollary from the
following theorem, proven in Section \refs{proofs}.
\begin{theorem}\label{t:main}
Let $k,l\ge 1$ and $n\ge 3$ be integers, and write
$\kap:=\lcl(l-1)/(n-2)\rcl-1$. Suppose that $A_1\longc A_{2\kap+1}\seq[0,l]$
are integer sets, having at least $n$ elements each, and such that none of
them are contained in an arithmetic progression with difference greater than
$1$. Then the sumset $A_1\longp A_{2\kap+1}$ contains a block of consecutive
integers of length $2(\kap+1)(n-1)-l>l$.
\end{theorem}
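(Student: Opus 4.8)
The plan is to construct, step by step, a block of consecutive integers inside a partial sumset, adjoining the $A_i$ one at a time in a suitable order; this follows the strategy of the proof of Theorem \reft{lev} in \refb{la}, but that theorem is not directly applicable since the summands here need not be identical, and coping with this is where the main additional difficulty lies. First normalize: translate each $A_i$ so that $\min A_i=0$ and put $l_i:=\max A_i\le l$; then $0,l_i\in A_i$, $\gcd(A_i)=1$, $|A_i|\ge n$, and we may discard elements to assume $|A_i|=n$ (keeping $0$, $l_i$, and $\gcd(A_i)=1$). Writing the consecutive differences of $A_i$ as $d_1^{(i)}\longc d_{n-1}^{(i)}$, we have $d_1^{(i)}\longp d_{n-1}^{(i)}=l_i$, so the largest difference of $A_i$ — i.e., its maximal gap — is at most $l_i-(n-2)\le l-(n-2)$; this elementary pigeonhole bound is the form in which the hypothesis $|A_i|\ge n$ gets used.

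The first ingredient is an \emph{absorption lemma}: if a set $S$ contains a block of consecutive integers $[u,v]$ and $A$ has $\min A=0$, $\max A=l_A$, and maximal gap $g_A$, then whenever $v-u+1\ge g_A$ the sumset $S+A$ contains the block $[u,v+l_A]$. Indeed, for $c\in[u,v+l_A]$ the window $[c-v,c-u]$ of $v-u+1\ge g_A$ consecutive integers either catches $0$ or $l_A$ (both in $A$) or lies inside $(0,l_A)$ and so cannot be swallowed by a gap of $A$. Combined with $g_{A_i}\le l-(n-2)$, this says: \emph{once the current block has length $\ge l-n+1$, every remaining summand enlarges it by its full span $l_i\ge n-1$.} So the task reduces to (i)~producing a block of length $\ge l-n+1$ from as few summands as possible, and (ii)~keeping precise track of the length, with an eye on the individual spans $l_i$ — for unlike in Theorem \reft{lev} there is no symmetry about $\tfrac12\sum l_i$ to exploit, and the extremal configurations have most $l_i$ close to $l$.

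The heart of the matter is the \emph{seed estimate}: adjoining the summands in a favourable order, a bounded number of them — at most $\kap+1$ — already yields a block of length $\ge l-n+1$, with the block length growing by roughly $n-2$ units per step in this initial phase. Here the calibration $\kap=\lcl(l-1)/(n-2)\rcl-1$ is exactly what is needed: each summand carries a ``cluster'' of closely spaced elements worth about $n-2$ of length, and $\kap$ such clusters span $\kap(n-2)\ge l-n+1$, which is precisely the inequality packaged in the definition of $\kap$. The main obstacle is that a naive one-at-a-time greedy can stall — if every difference of the incoming summand exceeds the current block length, the block does not grow at all — so one must order the summands carefully (or treat small batches at once) and invoke a pigeonhole / finite-addition estimate on the multiset of all differences, in the spirit of S\'ark\"ozy \refb{s} and of \refb{la}, to force steady growth. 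This is also where sharpness is won or lost: a crude accounting of the seed, and of the interaction between the spans of the ``seed'' summands and those of the later, fully absorbed, ones, loses an additive constant and just misses the exact length $2(\kap+1)(n-1)-l$; one must instead carry along the actual spans $l_i$ throughout.

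Finally one assembles the pieces. After the seed phase has used $p\le\kap+1$ summands to reach a block of length $L_0$ (where $L_0\ge l-n+1$, and in fact $L_0$ admits an explicit lower bound in terms of the spans of those $p$ summands), the remaining $2\kap+1-p\ge\kap$ summands are absorbed one by one, each adding its full span $l_i\ge n-1$; combining this with the seed bound and simplifying by means of the elementary inequalities $\kap(n-2)+2\le l\le(\kap+1)(n-2)+1$ produces a block of length at least $2(\kap+1)(n-1)-l$. The final inequality $2(\kap+1)(n-1)-l>l$ in the statement is then automatic, since $2(\kap+1)(n-1)=2(\kap+1)(n-2)+2(\kap+1)\ge2(l-1)+2(\kap+1)>2l$.
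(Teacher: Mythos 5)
There is a genuine gap, and it sits exactly where you placed ``the heart of the matter.'' Your seed estimate --- that some $p\le\kap+1$ of the summands already yield a block of consecutive integers of length $\ge l-n+1$ --- is not proved (the appeal to ``a pigeonhole / finite-addition estimate on the multiset of all differences'' is a placeholder, not an argument), and in fact it is false. Take $A_1\longe A_{2\kap+1}=[0,m]\cup[l-m,l]$ with $n=2m+2$: then $A_1\longp A_j=\bigcup_{i=0}^{j}\,[i(l-m),i(l-m)+jm]$, so the longest block after $j$ summands has length $jm=j(n-2)/2$, and reaching length $l-n+1$ (or the maximal gap $l-(n-2)$, which your absorption lemma requires) forces $j$ to be about $2\kap$, not $\kap+1$. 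This is no accident: the paper's own example following Theorem\reft{main} shows that $A_1\longp A_{2\kap}$ may contain no block of length $l$ at all, so \emph{any} strategy that first manufactures a long seed block inside a partial sumset and then absorbs the rest is structurally unable to hit the sharp threshold $2\kap+1$. Concretely, with $m=1$, $n=4$, $l=2\kap+2$, your scheme (seed of length $2\kap$ from $2\kap$ summands, then absorb the last span $l$) certifies only a block of length $4\kap+2$, short of the required $2(\kap+1)(n-1)-l=4\kap+4$; the true sumset $(2\kap+1)A$ is in fact the whole interval $[0,(2\kap+1)l]$, which your single-block bookkeeping cannot see because the gain comes from many short blocks merging simultaneously, not from one block growing.

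The paper's proof avoids this trap by never building a block until the very last step. It splits the $2\kap+1$ summands into two groups of $\kap$ and $\kap+1$ sets (interleaved by span so the two partial sumsets $S_1,S_2$ have nearly equal lengths), proves via an inductive application of the sumset-growth theorem (Theorem\reft{sumsetgrowth}, yielding Proposition\refp{ind}) that each of $S_1,S_2$ has density exceeding $1/2$ in its span, and only then applies the box principle (Lemma\refl{box}) to $S_1+S_2$ to produce the block of length $2(\kap+1)(n-1)-l$ in one stroke. Your absorption lemma is correct and your final numerical check of $2(\kap+1)(n-1)-l>l$ is fine, but the core of the proof is missing and the proposed architecture cannot be repaired to give the stated sharp result.
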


Observe, that Theorem \reft{main} guarantees the existence of a block of
consecutive integers of length $l$ in $A_1\longp A_k$ for $k=2\kap+1$, which
is a sharp threshold: for $k=2\kap$ the sumset may fail to contain such a
block, as witnessed, say, by the system of identical sets
  $$ A_1\longe A_{2\kap}=[0,m]\cup[l-m,l] $$
with $m\in[1,l/2)$ integer and $n=2m+2$. Indeed, in this case we have
  $$ A_1\longp A_{2\kap}
                      = \bigcup_{j=0}^{2\kap} \, [j(l-m),j(l-m)+2\kap m], $$
the length of each individual segment being $2\kap m=\kap(n-2)<l$, and the
segments not abutting, provided $\{(l-1)/(n-2)\}>1/2$.

Unfortunately, we were unable to eliminate the assumption that none of the
sets $A_j$ are contained in an arithmetic progression with difference greater
than $1$. This reminiscent of the condition $\gcd(A)=1$ from Theorem
\reft{lev} will be discussed in Section \refs{conclusion}.

The reader may compare the following corollary against Theorem \reft{szvu}.
\newpage
\begin{corollary}\label{c:main}
Suppose that $k,l\ge 1$ and $n\ge 3$ are integers, and
 $A_1\longc A_k\seq[0,l]$ are integer sets, having at least $n$ elements
each, and such that none of them are contained in an arithmetic progression
with difference greater than $1$. If $k\ge 2\lcl(l-1)/(n-2)\rcl$, then the
sumset $A_1\longp A_k$ contains a block of consecutive integers of length
$k(n-1)$.
\end{corollary}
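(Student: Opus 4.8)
To prove Corollary \refc{main} the plan is to combine Theorem \reft{main} with an elementary absorption lemma telling how adjoining one more summand lengthens a block of consecutive integers already contained in a sumset. First I translate each $A_j$ so that $\min A_j=0$; this shifts the sumset but does not affect its containing a block of consecutive integers of any prescribed length. Put $d_j:=\max A_j$, so that $n-1\le d_j\le l$, and let $l^*:=\max_j d_j$; reindex so that $d_1=l^*$. As $x\mapsto\lcl(x-1)/(n-2)\rcl$ is non-decreasing, the hypothesis $k\ge2\lcl(l-1)/(n-2)\rcl$ stays valid with $l$ replaced by the smaller $l^*$, and since the conclusion does not involve $l$ I may simply assume $l=l^*$. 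Thus we now have $A_1\longc A_k\seq[0,l]$ with $|A_j|\ge n$, none contained in an arithmetic progression of difference greater than $1$, and moreover $A_1$ of diameter exactly $l$ (in particular $l\ge n-1$).

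The absorption lemma is: if $P$ is a block of consecutive integers of length $L$ and $B$ a set of at least $n$ integers with $\max B-\min B=:d\le L+n-1$, then $P+B$ contains a block of consecutive integers of length $L+d$. Indeed, after normalizing $\min B=0$ and $P=[0,L]$, the $|B|\ge n$ intervals $P+b$ $(b\in B)$ have length $L$ and left endpoints ranging over $B\seq[0,d]$; consecutive elements of $B$ differ by at most $d-(n-2)\le L+1$, so these intervals chain up and their union is the block $[0,L+d]$.

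Now set $m:=\lcl(l-1)/(n-2)\rcl$ and $\kap:=m-1$; then $m\ge1$, $k\ge2m=2\kap+2$, and $A_2\longc A_{2\kap+2}$ is a system of $2\kap+1$ sets satisfying the hypotheses of Theorem \reft{main}. That theorem furnishes a block $P\seq A_2\longp A_{2\kap+2}$ of length $2(\kap+1)(n-1)-l=2m(n-1)-l$. A short computation --- which via $m\ge(l-1)/(n-2)$ reduces to $2(l-1)(n-1)\ge(2l-n+1)(n-2)$, i.e.\ to $2l+(n-1)(n-4)\ge0$ --- gives $2m(n-1)-l\ge l-n+1$, so the lemma applies to this $P$ and $B=A_1$ (of diameter exactly $l$), producing a block $Q\seq A_1\longp A_{2\kap+2}$ of length $\lpr2m(n-1)-l\rpr+l=2m(n-1)$. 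If $k=2m$ this already has the desired length $k(n-1)$. Otherwise I adjoin $A_{2m+1}\longc A_k$ one at a time: at each stage the current block has length at least $2m(n-1)$, which by $m\ge(l-1)/(n-2)$ is at least $l$ and hence at least every remaining $d_j$, so the lemma keeps applying and the block grows by $d_j\ge n-1$ with each summand. Once all $k$ summands are used, the block has length at least $2m(n-1)+(k-2m)(n-1)=k(n-1)$.

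The only point that is not mere bookkeeping is the reduction, in the first paragraph, to the case where some summand has diameter exactly $l$. Without it, simply feeding $2\kap+1$ of the sets to Theorem \reft{main} and absorbing the remaining $k-2\kap-1$ one by one delivers only a block of length $(k+1)(n-1)-l$, which is short of $k(n-1)$ by $l-n+1$; this deficit is recouped precisely by carrying out a single absorption step with the diameter-$l$ summand $A_1$, contributing $l$ to the length in place of the generic $n-1$.
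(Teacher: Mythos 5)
Your proof is correct and follows essentially the same route as the paper: normalize so that one summand has diameter exactly $l$ (shrinking $l$ if necessary), apply Theorem\reft{main} to $2\kap+1$ of the remaining sets, and absorb the leftover summands one at a time, with the full-diameter set contributing $l$ rather than $n-1$ to the block length. The only differences are cosmetic --- you absorb the diameter-$l$ summand first rather than last, and you state and verify explicitly the gap condition for the absorption step, which the paper leaves implicit.
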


\begin{proof}
Reducing the value of $l$ and renumbering the sets, if necessary, we can
assume that $0,l\in A_k$. Write $\kap:=\lcl(l-1)/(n-2)\rcl-1$. By Theorem
\reft{main}, the sumset $A_1\longp A_{2\kap+1}$ contains a block of
consecutive integers of length at least $2(\kap+1)(n-1)-l>l$. Adding one by
one the sets $A_{2\kap+2}\longc A_{k-1}$ to this sumset, we increase the
length of the block by at least $n-1$ each time, and adding $A_k$ at the last
step we increase the length by $l$. Consequently, $A_1\longp A_k$ contains a
block of length at least
  $$  (2(\kap+1)(n-1)-l) + (k-2\kap-2)(n-1) + l = k(n-1). $$
\end{proof}


\section{Proof of Theorem \reft{main}}\label{s:proofs}

For a finite, non-empty integer set $A$ let $\ell(A)$ denote the difference
of the largest and the smallest elements of $A$.

Our approach is fairly close to that employed in \refb{la}, with the
following result in its heart.
\begin{theorem}[Lev \protect{\cite[Theorem~1]{b:lb}}]\label{t:sumsetgrowth}
Let $k\ge 2$ be an integer, and suppose that $A_1\longc A_k$ are finite,
non-empty integer sets. If $\ell(A_j)\le\ell(A_k)$ for $j=1\longc k-1$ and
$A_k$ is not contained in an arithmetic progression with difference greater
than $1$, then
  $$ |A_1\longp A_k| \ge |A_1\longp A_{k-1}|
                            + \min \{ \ell(A_k),\, n_1\longp n_k-k+1 \}, $$
where
  $$ n_j = \begin{cases}
             |A_j| &\text{ if }\ \ell(A_j)<\ell(A_k) \\
             |A_j|-1 &\text{ if }\ \ell(A_j)=\ell(A_k)
           \end{cases}; \quad j=1\longc k. $$
\end{theorem}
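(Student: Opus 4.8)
The plan is to recast the inequality as a bound on deficiencies and to isolate a single, spanning-sensitive estimate as the crux. First I would translate each summand so that $\min A_j=0$; writing $m:=\ell(A_k)$, this puts $A_j\seq[0,\ell(A_j)]$ with $0,\ell(A_j)\in A_j$ and $\ell(A_j)\le m$. Set $B:=A_1\longp A_{k-1}$ and $L:=\ell(B)=\ell(A_1)\longp\ell(A_{k-1})$, so that $0,L\in B$. For an integer set $X$ let $\delta(X):=\ell(X)+1-|X|$ denote its deficiency, the number of integers between the smallest and largest elements of $X$ that are absent from $X$. Since $\ell$ is additive on sumsets of integers, $\ell(B+A_k)=L+m$, and the identity $|X|=\ell(X)+1-\delta(X)$ yields the exact relation
$$ |A_1\longp A_k|-|A_1\longp A_{k-1}|=m-\lpr\delta(B+A_k)-\delta(B)\rpr. $$
Consequently the theorem is equivalent to the deficiency bound
$$ \delta(B+A_k)-\delta(B)\le\max\{0,\,m-N\},\qquad N:=n_1\longp n_k-k+1. $$

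One half of this is free. Deficiency is subadditive, $\delta(X+Y)\le\delta(X)+\delta(Y)$ (immediate from $|X+Y|\ge|X|+|Y|-1$ for integer sets), whence $\delta(B+A_k)-\delta(B)\le\delta(A_k)=m-n_k$ and the sumset already grows by at least $n_k=|A_k|-1$. As $n_k\le\min\{m,N\}$ (the first since $|A_k|\le m+1$, the second since $N-n_k=n_1\longp n_{k-1}-(k-1)\ge0$), it remains to reach the stronger bound, i.e. to prove that the spanning of $A_k$ forces $\delta(B+A_k)$ to fall short of the subadditive value $\delta(B)+\delta(A_k)$ by at least $\min\{\delta(A_k),\,N-n_k\}$.

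I would establish this sharpening by induction on $k$. The base case $k=2$ is the heart of the matter, and here I would study the surviving holes $\mathcal{H}:=\{x\in[0,L+m]\colon(x-A_k)\cap B=\emptyset\}$, whose cardinality is exactly $\delta(B+A_k)$. Since $0,m\in A_k$, each $x\in\mathcal{H}$ forces both $x$ and $x-m$ to avoid $B$, while the intermediate elements of $A_k$ impose further avoidances; the spanning hypothesis (the differences of $A_k$ have $\gcd 1$) is precisely what prevents all these avoidances from persisting along long runs, so a careful count caps $|\mathcal{H}|$ at the required value, the subcases $\ell(A_1)<m$ and $\ell(A_1)=m$ being handled separately. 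For the inductive step I would peel off one summand $A_1$, apply the inductive bound to the shorter system $A_2\longc A_{k-1},A_k$ (whose longest set is still $A_k$, so the numbers $n_j$ are unchanged), and then restore $A_1$, once more distinguishing whether $\ell(A_1)=m$.

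The main obstacle is exactly this bookkeeping across the peeling step. Deficiency is only subadditive, and the subtracted term $\delta(B)$ runs the wrong way, so the induction cannot proceed on the pair $(|B|,L)$ alone: two equal copies of a set such as $\{0,1,s\}$ with $s$ large already attain equality in the theorem, yet $|B|-1$ strictly exceeds the correct value $N-n_k$ there, so the naive bound got by substituting $|B|-1$ for $N-n_k$ is false. The reason is that both summands reach the maximal diameter $m$, and the argument must therefore retain the count of full-diameter summands --- exactly the information recorded by the corrections $n_j=|A_j|-1$ and hence by $N$. Transporting this structural data intact through the induction, while keeping the spanning hypothesis available for the longest set $A_k$, is where the real work lies.
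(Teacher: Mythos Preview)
This theorem is not proved in the present paper; it is quoted from~\cite{b:lb} and used as a black box in the proof of Proposition~\ref{p:ind}. There is thus no in-paper argument to compare your proposal against.

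As a standalone attempt, your deficiency reformulation is correct and the reduction to $\delta(B+A_k)\le\delta(B)+\max\{0,\,m-N\}$ is clean. But what you have written is, by your own description, a plan whose two hardest steps are left open. The two-summand case is disposed of with ``a careful count caps $|\mathcal H|$''; this is precisely where the spanning hypothesis $\gcd(A_k-A_k)=1$ must do its work, and it is a genuine Freiman-type inequality (the sharp asymmetric $3k-4$ bound in the equal-diameter subcase) that needs a real argument, not a gesture. More seriously, the inductive scheme you outline does not close. Peeling off $A_1$ and invoking the hypothesis on $A_2,\ldots,A_k$ controls the increment $|A_2+\cdots+A_k|-|A_2+\cdots+A_{k-1}|$, whereas the quantity you must bound is $|A_1+\cdots+A_k|-|A_1+\cdots+A_{k-1}|$; adding $A_1$ to both sumsets does not transport the gap in any usable way, since $X\mapsto|A_1+X|$ enjoys no monotonicity in $|X|$ and subadditivity of $\delta$ runs in the wrong direction for the subtracted term. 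You yourself observe that tracking only $(|B|,\ell(B))$ is provably insufficient (your $\{0,1,s\}$ example) and that the finer information encoded by $N$ must be carried through the induction --- but you do not indicate any mechanism for doing so. You have accurately located the obstacle without overcoming it; as it stands, this is a map of where a proof would have to go, not a proof.
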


Up to some subtlety which we suppress for the moment, the strategy pursued
below is to apply Theorem \reft{sumsetgrowth} to show that if $k$ is large
enough (which in practice means $k\ge 2\kap+O(1)$), then the densities of the
sumsets $A_1\longp A_{\lfl k/2\rfl}$ and $A_{\lfl k/2\rfl+1}\longp A_k$
exceed $1/2$, and then use the box principle to conclude that the sumset of
the two, which is $A_1\longp A_k$, contains a long block of consecutive
integers. We start with the second, technically simpler, component.

\begin{lemma}\label{l:box}
Let $L_1$ and $L_2$ be positive integers and suppose that $S_1\seq[0,L_1]$
and $S_2\seq[0,L_2]$ are integer sets. If $\max\{L_1,L_2\}\le |S_1|+|S_2|-2$,
then
  $$ [L_1+L_2-(|S_1|+|S_2|-2),|S_1|+|S_2|-2] \seq S_1+S_2. $$
\end{lemma}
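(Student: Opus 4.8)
The plan is to exploit the pigeonhole principle in the standard way for sumsets. Fix an integer $t$ in the target interval $[L_1+L_2-(|S_1|+|S_2|-2),\,|S_1|+|S_2|-2]$; I want to exhibit $s_1\in S_1$ and $s_2\in S_2$ with $s_1+s_2=t$. Equivalently, writing $t-S_2:=\{t-s_2\colon s_2\in S_2\}$, I want $S_1\cap(t-S_2)\neq\varnothing$. The idea is to show that both $S_1$ and $t-S_2$ live inside one common interval of integers whose length is too short to accommodate two disjoint sets of sizes $|S_1|$ and $|S_2|$.

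First I would locate $t-S_2$. Since $S_2\seq[0,L_2]$, we have $t-S_2\seq[t-L_2,\,t]$. Combined with $S_1\seq[0,L_1]$, both sets lie in the interval $[\max\{0,\,t-L_2\},\,\min\{L_1,\,t\}]$. Now I use the two bounds on $t$: from $t\ge L_1+L_2-(|S_1|+|S_2|-2)$ one checks $t-L_2\ge L_1-(|S_1|+|S_2|-2)$, and the hypothesis $\max\{L_1,L_2\}\le|S_1|+|S_2|-2$ guarantees in particular $L_2\le|S_1|+|S_2|-2$, so $t-L_2\ge t-(|S_1|+|S_2|-2)\ge 0$ is not forced, but what matters is the length: the common interval has right endpoint at most $t$ (and at most $L_1$) and left endpoint at least $t-L_2$ (and at least $0$). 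A short case check on which of the two maxima/minima is attained shows that in every case the length of the common interval is at most $\max\{L_1,L_2,t,L_1+L_2-t\}$, and each of these four quantities is $\le|S_1|+|S_2|-2$ — the first two by hypothesis, and $t\le|S_1|+|S_2|-2$ and $L_1+L_2-t\le|S_1|+|S_2|-2$ directly from the definition of the target interval.

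Hence $S_1$ and $t-S_2$ are two subsets of an integer interval containing at most $|S_1|+|S_2|-1$ integers, so by the pigeonhole principle they cannot be disjoint; pick a common element, and its two representations give $t\in S_1+S_2$. Since $t$ was arbitrary in the stated interval, the lemma follows. The only mildly delicate point is the bookkeeping of the four cases for the endpoints of the common interval, but in each case the bound reduces to one of the four inequalities just listed, so there is no real obstacle; the hypothesis $\max\{L_1,L_2\}\le|S_1|+|S_2|-2$ is exactly what is needed to kill the two cases not covered by the definition of the target interval.
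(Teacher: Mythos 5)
Your overall strategy --- fix $t$ in the target interval and show that $S_1$ and $t-S_2$ cannot be disjoint --- is sound, and it is in fact the dual form of the paper's own argument (the paper counts representations $g=s_1+s_2$ with $s_1\in[0,L_1]$, $s_2\in[0,L_2]$ and compares with the total number of gaps in $S_1$ and $S_2$; that inequality is equivalent to yours). However, there is a genuine error in the step where you place both sets into a common interval: you assert that $S_1$ and $t-S_2$ both lie in $[\max\{0,t-L_2\},\min\{L_1,t\}]$, which is the \emph{intersection} of $[0,L_1]$ and $[t-L_2,t]$. That containment is false in general: $S_1$ may well have elements of $[0,L_1]$ lying outside $[t-L_2,t]$, and $t-S_2$ may have elements outside $[0,L_1]$. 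For instance, with $L_1=5$, $S_1=\{0,1,2,5\}$, $L_2=2$, $S_2=\{0,1,2\}$ and $t=3$, the intersection is $[1,3]$ and $S_1\not\seq[1,3]$. With the intersection as container, the pigeonhole count using the full cardinalities $|S_1|$ and $|S_2|$ is not justified; to make it work one would have to subtract the elements lost outside the intersection, which changes the inequality you need (and, once done correctly, turns into exactly the paper's gap-counting computation).

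The repair is immediate: use the \emph{union} instead. Since $|S_1|\le L_1+1$ and $|S_2|\le L_2+1$, every $t$ in the target interval satisfies $0\le t\le L_1+L_2$, so the intervals $[0,L_1]$ and $[t-L_2,t]$ overlap and $S_1\cup(t-S_2)\seq[\min\{0,t-L_2\},\max\{L_1,t\}]$. The length of this union interval is, according to the position of $t$ relative to $L_1$ and $L_2$, one of $t$, $L_1$, $L_2$, $L_1+L_2-t$ --- the very four quantities you listed, each at most $|S_1|+|S_2|-2$ by hypothesis or by the definition of the target interval. Hence the union contains at most $|S_1|+|S_2|-1$ integers and cannot accommodate two disjoint subsets of sizes $|S_1|$ and $|S_2|$, so $S_1\cap(t-S_2)\neq\varnothing$ and $t\in S_1+S_2$. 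With this one correction your proof is complete and coincides in substance with the paper's.
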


\begin{proof}
Given an integer $g\in[0,L_1+L_2]$, the number of representations $g=s_1+s_2$
with arbitrary $s_1\in[0,L_1]$ and $s_2\in[0,L_2]$ is
\begin{align*}
  |(g-[0,L_1]) \cap [0,L_2]|
    &= |[g-L_1,g] \cap [0,L_2]| \\
    &= \min\{g,L_2\} - \max\{g-L_1,0\} + 1 \\
    &= \min\{g,L_2\} + \min\{g,L_1\} - g + 1.
\end{align*}
In order for $g\in S_1+S_2$ to hold, it suffices that this number of
representations exceeds the total number of ``gaps'' in $S_1$ and $S_2$; that
is,
  $$ \min\{g,L_2\} + \min\{g,L_1\} - g + 1 > L_1+L_2+2-|S_1|-|S_2|. $$
This, however, follows immediately for every $g$ in the interval
$[L_1+L_2-(|S_1|+|S_2|-2),|S_1|+|S_2|-2]$, by considering the location of $g$
relative to $L_1$ and $L_2$.
\end{proof}

We now turn to the more technical part of the argument, consisting in
inductive application of Theorem \reft{sumsetgrowth}.
\begin{proposition}\label{p:ind}
Let $k,l\ge 1$, and $n\ge 3$ be integers, and suppose that $A_1\longc A_k$
are integer sets with $|A_i|\ge n$ and $\ell(A_i)\le l$ for $i=1\longc k$,
such that none of these sets are contained in an arithmetic progression with
difference greater than $1$. Write $S=A_1\longp A_k$.
\begin{itemize}
\item[(i)]  If $k\ge (l-1)/(n-2)-1$, then
  $|S|\ge\frac12\,(\ell(S)+(k+1)(n-1)-l+2)$;
\item[(ii)] if $k\ge (l-1)/(n-2)$, then $|S|\ge\frac12\,(\ell(S)+k(n-1)+2)$.
\end{itemize}
\end{proposition}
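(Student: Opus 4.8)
The plan is to add the sets one at a time, in non-decreasing order of diameter, $\ell(A_1)\le\dotsb\le\ell(A_k)$, applying Theorem \reft{sumsetgrowth} at each stage to $S_i:=A_1\longp A_i$. Since $|A_j|\ge n$ forces $n_j\ge n-1$, the quantity $\sig_i:=n_1\longp n_i-i+1$ from that theorem obeys $\sig_i\ge i(n-2)+1$; and since $\ell(S_i)=\ell(A_1)\longp\ell(A_i)$ grows by exactly $\ell(A_i)$ while $|S_i|$ grows by at least $\min\{\ell(A_i),\sig_i\}$, the quantity $D_i:=2|S_i|-\ell(S_i)$ grows by at least $\min\{\ell(A_i),\,2\sig_i-\ell(A_i)\}$ at step $i$. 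I will call step $i$ \emph{$\ell$-limited} if $\ell(A_i)\le\sig_i$ (so that $D_i$ gains at least $\ell(A_i)\ge|A_i|-1\ge n-1$) and \emph{$\sig$-limited} otherwise. Both parts of the proposition are lower bounds for $D_k$, and I would prove them together.

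First I would dispose of the easy directions. Appending a set is always $\ell$-limited once one is at or past the relevant threshold: if $i\ge(l-1)/(n-2)$ then $\ell(A_i)\le l\le i(n-2)+1\le\sig_i$. From this one gets, by one-line estimates: part~(ii) for $k$ sets follows from the $(k-1)$-set cases already in hand (part~(ii), or part~(i) when the truncated system sits at its own threshold); and part~(i) for $k$ beyond the threshold value follows from part~(ii) for that same $k$, because $l\ge n-1$ gives $(k+1)(n-1)-l\le k(n-1)$. Together with the degenerate case $l\le n-1$ — where every $A_i$, and hence $S_k$, is a full interval and both bounds become equalities — this reduces the whole proposition to a single assertion: part~(i) at the threshold $k=\kap:=\lcl(l-1)/(n-2)\rcl-1$, where, after decreasing $l$ if necessary, we may take $l=\ell(A_k)$, so that $\kap(n-2)+2\le l\le(\kap+1)(n-2)+1$.

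This remaining assertion I would prove by induction on $\kap$, the base cases $\kap\le1$ being immediate from $|A_i|\ge n$. If the system has a proper prefix $A_1\longc A_j$ that already lies at or beyond its own threshold, namely $\ell(A_j)\le(j+1)(n-2)+1$, then I apply the inductive hypothesis (or an already-settled case of part~(ii)) to $A_1\longp A_j$ and re-append $A_{j+1}\longc A_\kap$ one at a time via Theorem \reft{sumsetgrowth}, a further one-step estimate showing the target bound for $D_\kap$ is preserved. The real content is the opposite situation, in which every $\ell(A_i)$ is so large ($\ell(A_i)>(i+1)(n-2)+1$) that no such prefix exists; then one must run the growth iteration directly and control its $\sig$-limited steps. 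The extremal systems $A_1\longe A_\kap=[0,n-2]\cup\{l\}$ and $A_1\longe A_\kap=[0,m]\cup[l-m,l]$ show that the inequality is tight and that in the worst case essentially every step is $\sig$-limited.

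Handling the $\sig$-limited steps is the main obstacle. When the summands are identical — the setting of \refb{la} — these steps form an initial block, and the bound drops out of the crude summation $\sum_i\bigl(2\sig_i-\ell(A_i)\bigr)$ using $\sig_i\ge i(n-2)+1$ and $\ell(A_i)\le l$. For distinct summands a sharp jump in the diameters $\ell(A_i)$ can interpose an $\ell$-limited step and push a $\sig$-limited step to the end of the iteration; the deficit $2(\ell(A_i)-\sig_i)$ it creates in $D_i$ must then be offset by the surplus produced at the subsequent steps, whose summands have diameter at least $\ell(A_i)$, together with the $-l$ correction carried in the target bound. Making this trade-off quantitative — bounding the total deficit accumulated over the $\sig$-limited steps by $\ell(S_\kap)-(\kap+1)(n-1)+l$ — is where I expect the case analysis and bookkeeping to be concentrated.
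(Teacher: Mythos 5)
Your setup coincides with the paper's: order the summands by non-decreasing diameter, telescope Theorem \reft{sumsetgrowth}, and track the quantity $2|S_i|-\ell(S_i)$, whose increment at step $i$ is at least $\min\{\ell(A_i),\,2\sig_i-\ell(A_i)\}$ with $\sig_i\ge i(n-2)+1$. Your chain of reductions (part (ii) from the truncated system, part (i) above threshold from part (ii), the degenerate case $l=n-1$) is broadly sound, though it inverts the paper's order --- the paper proves (ii) directly and then obtains (i) by adjoining a phantom index $k+1$ with $l_{k+1}=l_k$ --- and at least one of your reduction steps (re-appending $A_{j+1}\longc A_\kap$ after an inductive prefix, where a later summand of much larger diameter can again produce a $\sig$-limited step) already runs into the very difficulty you defer.

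That difficulty is the actual content of the proposition, and your proposal does not resolve it: you correctly identify that interleaving $\ell$-limited and $\sig$-limited steps creates a deficit that must be offset by later surpluses, and then write that making this trade-off quantitative ``is where I expect the case analysis and bookkeeping to be concentrated.'' That is precisely the step the paper supplies and your argument omits; the crude summation you mention only closes the all-$\sig$-limited case. The paper's device is as follows: after reducing the per-step gain to $\min\{(l_j-1)/2,\ j(n-2)-(l_j-1)/2\}$, colour each index $j$ red or blue according to whether $l_j-1\ge j(n-2)$ or not; since index $1$ is red, $[1,k]$ splits into maximal intervals $J_\nu=[s,t]$, each a red block followed by a (possibly empty) blue block. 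On each such interval, using the monotonicity $l_s\le\dots\le l_t$ and the key inequality $l_{q+1}-1\le(q+1)(n-2)$ at the last red index $q$ of $[s,t-1]$, the partial sum is shown to be at least $\frac{n-2}{2}\,s\,|J_\nu|\ge\frac{n-2}{2}\,|J_\nu|$, and summing over $\nu$ yields the required $k(n-2)/2$. Without this (or an equivalent) estimate your induction has no engine for the genuinely mixed case, so as it stands the proof is incomplete.
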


Observe, that under assumption (i) we have $(k+1)(n-1)-l+2>0$, so that
$|S|>\ell(S)/2$ in this case. Similarly, under assumption (ii) we have
$k(n-1)+2>l$, and hence in this case the stronger estimate
$|S|>(\ell(S)+l)/2$ holds.

\begin{proof}[Proof of Proposition \refp{ind}]
Write $l_j=\ell(A_j)$. Without loss of generality, we can assume that
$l_1\longle l_k$. By Theorem \reft{sumsetgrowth} and in view of
$\ell(S)=l_1\longp l_k$, we have
\begin{align}
  |S|-\frac12\,\ell(S)
     &\ge \sum_{j=1}^k \big( \min \{ l_j-1,j(n-2) \}  + 1 \big) + 1
                                   - \frac12\,(l_1\longp l_k) \notag \\
     &=   \sum_{j=1}^k \min \lfr \frac{l_j-1}2, j(n-2)-\frac{l_j-1}2 \rfr
           + \frac k2\,+1. \label{e:sum}
\end{align}

Starting with assertion (ii), assume that $k\ge(l-1)/(n-2)$. Color all
integers $j\in[1,k]$ red or blue, according to whether $l_j-1\ge j(n-2)$ or
$l_j-1<j(n-2)$. Notice that the integer $1$ is then colored red, hence the
interval $[1,k]$ can be partitioned into a union of adjacent subintervals
$J_1\longu J_K$ so that each subinterval consists of a block of consecutive
red integers followed by a block of consecutive blue integers, with all
blocks non-empty --- except that the rightmost subinterval $J_K$ may consist
of a ``red block'' only. Accordingly, we write the sum over $j\in[1,k]$ in
the right-hand side of \refe{sum} as $\sig_1\longp \sig_K$, where for each
$\nu\in[1,K]$ by $\sig_\nu$ we denote the sum over all $j\in J_\nu$.

Fixing $\nu\in[1,K]$, write $J_\nu=[s,t]$ and define $q$ to be the largest
red-colored number of the interval $[s,t-1]$. This \emph{does not} define $q$
properly if $\nu=K$ and $s=t=k$; postponing the treatment of this exceptional
case, suppose for the moment that $q$ is well defined. Thus either $q+1$ is
blue, whence $l_{q+1}-1<(q+1)(n-2)$, or $\nu=K,\,t=k$, and $q=k-1$, whence
  $$ l_{q+1}-1=l_k-1\le l-1\le k(n-2)=(q+1)(n-2). $$
Observe, that
\begin{equation}\label{e:lq+1}
  l_{q+1}-1 \le (q+1)(n-2)
\end{equation}
holds in either case, and it follows that
\begin{align*}
  \sig_\nu &=   \sum_{j=s}^q \lpr j(n-2)-\frac{l_j-1}2\rpr
                                          + \sum_{j=q+1}^t \frac{l_j-1}2 \\
           &\ge \frac{n-2}2 \, (q^2+q-s^2+s) - \frac{l_q-1}2 \, (q+1-s)
                                                + \frac{l_q-1}2 \, (t-q) \\
           &=   \frac{n-2}2 \, (q^2+q-s^2+s) + \frac{l_q-1}2 \, (s+t-1-2q).
\end{align*}
We now distinguish two cases: $q\le(s+t-1)/2$ and $q\ge(s+t-1)/2$. In the
former case we have
\begin{align*}
  \sig_\nu &\ge \frac{n-2}2\,(q^2+q-s^2+s+q(s+t-1-2q)) \\
           &=   \frac{n-2}2\,(q(s+t-q)-s^2+s) \\
           &=   \frac{n-2}2\,(st+(q-s)(t-q)-s^2+s) \\
           &\ge \frac{n-2}2\,(st-s^2+s) \\
           &=   \frac{n-2}2\,s|J_\nu|.
\end{align*}
In the latter case, taking into account that
 $l_q-1\le l_{q+1}-1\le (q+1)(n-2)$ by \refe{lq+1}, we obtain the same
estimate:
\begin{align*}
  \sig_\nu &\ge \frac{n-2}2\,(q^2+q-s^2+s+(q+1)(s+t-1-2q)) \\
           &=   \frac{n-2}2\,(q(s+t-2-q)-s^2+2s-1+t) \\
           &\ge \frac{n-2}2\,((t-1)(s-1)-s^2+2s-1+t) \\
           &=   \frac{n-2}2\,(st-s^2+s) \\
           &=   \frac{n-2}2\,s|J_\nu|.
\end{align*}
Addressing finally the situation where $\nu=K$ and $s=t=k$, we observe that
in this case
\begin{align*}
  \sig_\nu &=   k(n-2) - \frac{l_k-1}2 \\
           &\ge k(n-2) - \frac{l-1}2 \\
           &\ge \frac{k(n-2)}2 \\
           &=   \frac{n-2}2\,s|J_\nu|,
\end{align*}
as above. Thus,
  $$ \sig_1\longp\sig_K \ge \frac{n-2}{2}\ \big( |J_1|\longp|J_K| \big)
                                                  = k \, \frac{n-2}{2}, $$
and substituting this into \refe{sum} we obtain assertion (ii).

To prove assertion (i), instead of $k\ge(l-1)/(n-2)$ assume now the weaker
bound $k\ge(l-1)/(n-2)-1$. Set $l_{k+1}=l_k$, so that $l_{k+1}-1\le
l-1\le(k+1)(n-2)$. From \refe{sum} we get
  $$ |S|-\frac12\,\ell(S)
       \ge \sum_{j=1}^{k+1} \min \lfr \frac{l_j-1}2, j(n-2)-\frac{l_j-1}2 \rfr
                                       - \frac{l_{k+1}-1}2 + \frac k2\,+1. $$
Since $k+1\ge (l-1)/(n-2)$, the sum over $j$ can be estimated as above, and
it is at least $(k+1)(n-2)/2$. Assertion (i) now follows in view of
\begin{multline*}
  \frac{(k+1)(n-2)}2-\frac{l_{k+1}-1}2+\frac k2+1 \\
         \ge \frac12\,\big( (k+1)(n-2) - (l-1) + k + 2\big)
                  = \frac12\,\big( (k+1)(n-1) - l + 2 \big).
\end{multline*}
\end{proof}

Proposition \refp{ind} took us most of the way to the proof of Theorem
\reft{main}.
\begin{proof}[Proof of Theorem \reft{main}]
Assume that the sets $A_1\longc A_{2\kap+1}$ are so numbered that, letting
$l_j=\ell(A_j)$ for $j\in[1,2\kap+1]$, we have
 $l_1\ge l_2\longge l_{2\kap+1}$. We are going to partition our sets into two
groups to apply Lemma \refl{box} to the sumsets $S_1$ and $S_2$ of these
groups, and this is to be done rather carefully as effective application of
the lemma requires that $S_1$ and $S_2$ be of nearly equal length.

Accordingly, we let $S_1:=A_1+A_3\longp A_{2\kap-1}$ (including all sets
$A_j$ with odd indices $j<2\kap$) and
 $S_2:=(A_2+A_4\longp A_{2\kap})+A_{2\kap+1}$ (all sets $A_j$ with even
indices $j\le 2\kap$ and $A_{2\kap+1}$). By Proposition \refp{ind} we have
\begin{align}
  |S_1| &\ge \frac12\,(\ell(S_1)+(\kap+1)(n-1)-l+2) \label{e:sls1} \\
\intertext{and}
  |S_2| &\ge \frac12\,(\ell(S_2)+(\kap+1)(n-1)+2). \label{e:sls2}
\end{align}
Furthermore, from
  $$ \ell(S_1)-\ell(S_2)=(l_1-l_2)\longp(l_{2\kap-1}-l_{2\kap})
                                                  -l_{2\kap+1}\in[-l,l], $$
using \refe{sls1} and \refe{sls2} we get
\begin{multline*}
  \max \{ \ell(S_1),\ell(S_2) \} \le \frac12\,(\ell(S_1)+\ell(S_2)+l) \\
    \le |S_1|+|S_2| +l - (\kap+1)(n-1) -2 \le |S_1|+|S_2| - 2.
\end{multline*}
Applying Lemma \refl{box} and using again \refe{sls1} and \refe{sls2}, we
conclude that $A_1\longp A_k=S_1+S_2$ contains a block of consecutive
integers of length at least
  $$ 2(|S_1|+|S_2|-2) - (\ell(S_1)+\ell(S_2))
       \ge 2(\kap+1)(n-1) -l , $$
as required.
\end{proof}

\section{Concluding remarks and open problems}\label{s:conclusion}

The major challenge arising in connection with the main results of this paper
(which are Theorem \reft{main} and Corollary \refc{main}) is to get rid of
the assumption that none of the sets involved are contained in an arithmetic
progression with difference greater than $1$. One can expect that a vital
ingredient of such an improvement would be a suitable generalization of
Theorem \reft{sumsetgrowth}. Indeed, we were able to generalize Theorem
\reft{sumsetgrowth} in what seems to be the right direction.
\begin{primetheorem}\label{t:classes}
Let $k\ge 2$ be an integer, and let $A_1\longc A_k$ be finite, non-empty
integer sets. Suppose that $l\in A_k-A_k$ is a positive integer, and define
$d$ to be the largest integer such that $A_k$ is contained in an arithmetic
progression with difference $d$. Then
  $$ |A_1\longp A_k| \ge |A_1\longp A_{k-1}|
                              + \min \{ hl/d,\, n_1\longp n_k-k+1 \}, $$
where $h$ is the number of residue classes modulo $d$, represented in
$A_1\longp A_{k-1}$, and $n_j\ (j=1\longc k)$ is the number of residue
classes modulo $l$, represented in $A_j$.
\end{primetheorem}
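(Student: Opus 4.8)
The plan is to exploit the arithmetic-progression structure of $A_k$ by fibering the sumset over residue classes modulo $d$, reducing the problem to a Freiman-type growth estimate inside a single class, and then to control the resulting class counts by the Cauchy--Davenport--Kneser inequality in $\mathbb{Z}/l$. First I would normalize: translating the individual summands changes none of $|A_1\longp A_k|$, $|A_1\longp A_{k-1}|$, or the counts $n_j$, so I may assume $0$ is the least element of $A_k$. Then $A_k\seq d\mathbb{Z}$, the set $B:=d^{-1}A_k$ satisfies $\gcd(B-B)=1$ (it lies in no arithmetic progression with common difference exceeding $1$), and $m:=l/d$ is a positive integer belonging to $B-B$.

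Write $S'=A_1\longp A_{k-1}$ and $S=S'+A_k$. Since $A_k=dB\seq d\mathbb{Z}$, splitting $S'$ into its $h$ residue classes modulo $d$, say $S'=S'_1\sqcup\dotsb\sqcup S'_h$ with $S'_i$ inside the class $r_i$, decomposes $S$ into the sets $S'_i+A_k$; these lie in distinct classes modulo $d$, hence are pairwise disjoint, and writing $C_i:=d^{-1}(S'_i-r_i)$ gives
  $$ |S|-|S'| = \sum_{i=1}^h\big(|C_i+B|-|C_i|\big). $$
The crucial ingredient is the Freiman-type inequality
  $$ |C+B|-|C| \ge \min\{m,\ \gamma+\nu-1\}, $$
valid for every finite nonempty $C\seq\mathbb{Z}$, where $\gamma$ and $\nu$ count the residue classes modulo $m$ represented in $C$ and in $B$ respectively; one must use $\gcd(B-B)=1$ and $m\in B-B$ together here, since reduction modulo $m$ alone forgets that $B$ carries a pair at distance exactly $m$ and is too weak. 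I would prove this by induction on $|B|$, invoking the structure theorem for pairs of integer sets with small sumset. Granting it and substituting $C=C_i$, one has $\gamma_i\le m$ for each $i$; moreover $\sum_i\gamma_i$ equals the number $\mu$ of residue classes of $S'$ modulo $l$ (distinct slices occupy distinct classes modulo $l$ because $d\mid l$), while $\nu$ equals $n_k$. A short computation --- separating whether every summand already attains $m$ --- then converts the displayed sum into $|S|-|S'|\ge\min\{hl/d,\ \mu+n_k-1\}$.

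It remains to show $\mu\ge n_1\longp n_{k-1}-(k-1)+1$: when this holds, the bound above yields the theorem, and when it fails the first term $hl/d$ of the minimum will have to take over. Since $\mu=|\bar A_1\longp\bar A_{k-1}|$ in $\mathbb{Z}/l$, Kneser's theorem gives $\mu\ge\sum_{j<k}|\bar A_j+K|-(k-2)|K|$ with $K$ the stabilizer of $\bar A_1\longp\bar A_{k-1}$; for $K$ trivial this is exactly $n_1\longp n_{k-1}-(k-1)+1$, and for $K=\mathbb{Z}/l$ one has $\mu=l$. When $K$ is proper and non-trivial the estimate may genuinely lose, and I would argue that $\bar A_1\longp\bar A_{k-1}$, being a union of $K$-cosets, is then structurally rigid: if $K\seq H$ (the subgroup of order $m$ containing $\bar A_k$), periodicity inside the $H$-cosets met by $S'$ forces $\mu$ up to roughly $hl/d$, so the first term of the minimum governs; if $K\not\seq H$, the image of $\bar A_1\longp\bar A_{k-1}$ in $\mathbb{Z}/d$ is periodic, so $h$ is a multiple of $|K|/|K\cap H|\ge2$, which enlarges $hl/d$ correspondingly.

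The main obstacle is exactly this coupling of the two terms of the minimum through Kneser's stabilizer: one has to show that every unit of deficit in the Cauchy--Davenport estimate for $A_1\longp A_{k-1}$ modulo $l$ is compensated --- by an enlarged $h$, or by the rigidity that $K$-periodicity imposes on the summands --- so that $\min\{hl/d,\,n_1\longp n_k-k+1\}$ survives intact; balancing these contributions across all positions of $K$ in $\mathbb{Z}/l$ is the part calling for real work. A secondary, more routine difficulty is the Freiman-type inequality $|C+B|-|C|\ge\min\{m,\gamma+\nu-1\}$ itself: since $C$ may be far longer than $B$, one cannot reduce to the assertion that $C+B$ contains a long block of integers, and must argue directly from $\gcd(B-B)=1$ and $m\in B-B$.
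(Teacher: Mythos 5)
The paper never proves Theorem\reft{classes}: the author states explicitly that ``in the absence of an application we do not present here the proof of the former theorem,'' so there is no argument of the paper to compare yours against, and your proposal must stand on its own. Its reduction framework is sound as bookkeeping: after translating so that $0=\min A_k$ and writing $A_k=dB$, $m=l/d$, the fibering over classes modulo $d$ does give $|S|-|S'|=\sum_{i=1}^h(|C_i+B|-|C_i|)$, the identity $\sum_i\gamma_i=\mu$ and the equality $\nu=n_k$ are correct, and your ``short computation'' converting $\sum_i\min\{m,\gamma_i+\nu-1\}$ into $\min\{hm,\mu+\nu-1\}$ checks out. The problem is that your ``crucial ingredient'' $|C+B|-|C|\ge\min\{m,\gamma+\nu-1\}$ is precisely the case $k=2$, $d=1$ of the theorem being proved (there $h=1$ and $hl/d=l=m$), so the proposal reduces the theorem to its own hardest special case and then dismisses that case with one sentence (``induction on $|B|$, invoking the structure theorem''). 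The lemma is presumably true, but it carries essentially all of the content of Theorem\reft{classes}, and nothing in your write-up establishes it.

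The second gap is more serious because the sketched repair is false as stated. Your fibering delivers only $\min\{hl/d,\mu+n_k-1\}$, and when Kneser's stabilizer $K$ of $\bar A_1\longp\bar A_{k-1}$ in $\mathbb{Z}/l\mathbb{Z}$ is proper and nontrivial you claim that either $h$ is enlarged or ``periodicity inside the $H$-cosets met by $S'$ forces $\mu$ up to roughly $hl/d$.'' Take $l=6$, $A_1=A_2=A_3=\{0,3\}$, $A_4=\{0,1,6\}$: then $d=1$, $h=1$, $m=6$, $n_1\longe n_4=2$, and $K=\{0,3\}\seq H=\mathbb{Z}/6\mathbb{Z}$, yet $\mu=2$, nowhere near $hl/d=6$. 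Your intermediate bound gives $\min\{6,\,2+2-1\}=3$, while the theorem demands $\min\{6,\,2+2+2+2-3\}=5$ (the actual growth is $6$, from $|S'|=4$ to $|S|=10$). The two missing units come from \emph{which} residue classes $S'$ and $A_4$ occupy, information that is irretrievably discarded once $S'$ enters the argument only through the counts $h$ and $\mu$ and through the class counts $\gamma_i$ of the individual fibers. So the ``coupling'' you flag as the part calling for real work is not a balancing act to be completed later: in this example the fibering strategy itself throws away exactly the growth the theorem asserts, and either a substantially stronger per-fiber lemma (one tracking the positions of the classes) or a different global argument is required at that point.
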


Clearly, Theorem \reft{classes} implies Theorem \reft{sumsetgrowth} and,
furthermore, shows that the assumption of Theorem \reft{main} that none of
the sets $A_j$ are contained in an arithmetic progression with difference
greater than $1$ can be slightly relaxed. Specifically, it suffices to
request that the $A_j$ can be so ordered that $\ell(A_j)$ increase, and for
every $k\in[1,2\kap]$ the sumset $A_1\longp A_k$ represents all residue
classes modulo $\gcd(A_{k+1}-A_{k+1})$. It seems, however, that Theorem
\reft{classes} by itself fails short to extend Theorem \reft{main} the
desired way, dropping the modular restriction altogether, and in the absence
of an application we do not present here the proof of the former theorem.

Another interesting direction is to refine Theorem \reft{main} as to the
length of the block, contained in the sumset $A_1\longp A_{2\kap+1}$. While
we observed that $2\kap+1$ is the smallest number of summands which ensures a
block of length $l$, it is quite possible that the existence of a longer
block can be guaranteed. In this connection we mention the following
conjecture from \refb{la}, referring to the equal summands situation.
\begin{conjecture}\label{j:lint}
Let $k,l\ge 1$ and $n\ge 3$ be integers, and write
$\kap:=\lfl(l-1)/(n-2)\rfl$. Suppose that $A\seq[0,l]$ is an integer set with
$0,l\in A,\,\gcd(A)=1$, and $|A|=n$. If $k\ge 2\kap+1$, then $kA$ contains a
block of consecutive integers of length $(k-\kap)l+k((\kap+1)(n-2)+2-l)$.
\end{conjecture}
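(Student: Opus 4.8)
The natural plan is to mirror the proof of Theorem \reft{main}. Fix $k\ge2\kap+1$, split the $k$ copies of $A$ into two groups of $k_1$ and $k_2=k-k_1$ copies with $|k_1-k_2|\le1$, and set $S_1=k_1A$, $S_2=k_2A$, so that $S_1+S_2=kA$ while $\ell(S_1)=k_1l$ and $\ell(S_2)=k_2l$. Since $k_i\ge\kap\ge(l-1)/(n-2)-1$ for $i=1,2$, Proposition \refp{ind} applies to both groups and gives $|S_1|+|S_2|\ge\frac12(kl+(k+1)(n-1)-l+4)$, whereupon Lemma \refl{box} extracts from $kA$ a block of length at least $(k+1)(n-1)-l$. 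This much is immediate, but the bound it yields is only of the strength of Theorem \reft{main}, and for $\kap$ large it lies far below the conjectured length; so the real work consists in feeding the box principle something considerably sharper than a density estimate.

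That sharper input should be the \emph{exact} structure of the two summand sumsets $k_iA$, as supplied by Theorem \reft{lev}: whenever $k_i\ge2\kap$, the set $k_iA$ agrees with the full interval $[0,k_il]$ except possibly at integers lying in $[0,\kap M-1]\cup[k_il-\kap M+1,k_il]$, where $M:=2l-2-(\kap+1)(n-2)$, so that $k_iA$ omits at most $2\kap M$ integers, all of them clustered symmetrically near the two endpoints. Re-running the representation count in the proof of Lemma \refl{box} with this information --- comparing the number of solutions $g=s_1+s_2$ not with the total number of gaps of $S_1$ and $S_2$ but only with the much smaller number of gaps that can actually obstruct the particular value $g$ --- should yield a block longer than the interval of Theorem \reft{lev} by a quantity of order $\kap M$, which is precisely the gap between Theorem \reft{lev} and the present statement. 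The awkward range is $2\kap+1\le k<4\kap$: there $k$ cannot be split so that both groups keep at least $2\kap$ copies, Theorem \reft{lev} is not directly available for $S_1$ and $S_2$, and the necessary structural information for $k_iA$ has to be produced by hand --- essentially rerunning the argument of \refb{la} while following an off-centre block rather than the symmetric interval of Theorem \reft{lev}.

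I expect that range to be the main obstacle, and before attacking it I would calibrate candidate formulas against the family $A=[0,n-2]\cup\{l\}$ (with $l>n-2$), for which $kA=\bigcup_{b=0}^{k}[bl,\,bl+(n-2)(k-b)]$ and the longest block is explicit. That test indicates that the length claimed in the statement is somewhat too large --- already for $n=3$, $l=5$, $k=9$ it asserts a block of length $43$, whereas the longest block of $9A$ has length $33$ --- and that the second summand of the claimed length should presumably read $\kap((\kap+1)(n-2)+2-l)$ in place of $k((\kap+1)(n-2)+2-l)$; so isolating the correct extremal configuration is itself part of the task. Once the sharp formula is settled, the scheme above --- Theorem \reft{sumsetgrowth} (or its refinement Theorem \reft{classes}) for the densities of the two groups, Theorem \reft{lev} for their fine structure wherever enough copies are available, and Lemma \refl{box} with a gap-sensitive representation count for the final extraction --- looks like the most promising route to the lower bound.
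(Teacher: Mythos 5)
This statement is Conjecture~\refj{lint}, which the paper does not prove: it is quoted from \refb{la}, where it is established only for $k\ge 3\kap$, and the paper explicitly records that the range $2\kap+1\le k<3\kap$ remains open. So there is no proof here to measure your attempt against, and your text is in any case a programme rather than a proof. The part you do carry out --- splitting the $k$ copies of $A$ into two near-equal groups and feeding Proposition~\refp{ind} and Lemma~\refl{box} --- only reproduces a block of length about $(k+1)(n-1)-l$, i.e.\ the strength of Theorem~\reft{main}, which for $l$ substantially larger than $n$ falls far short of the conjectured length; the promised sharpening (a gap-sensitive representation count exploiting the exact structure of $k_iA$ supplied by Theorem~\reft{lev}) is only gestured at, and the range in which Theorem~\reft{lev} is unavailable for at least one of the two halves is essentially the range the paper itself declares open. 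You have not closed that gap.

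What you \emph{have} done correctly, and it is the most valuable part of the proposal, is to show that the conjecture as printed is false. For $A=\{0,1,5\}$, $n=3$, $l=5$ one has $\kap=4$ and $9A=\bigcup_{b=0}^{9}[5b,\,4b+9]$, whose longest block of consecutive integers is $[0,33]$, of length $33$, while the printed formula demands $43$; worse, for $k=12$ it demands $64$, exceeding the diameter $kl=60$ of $12A$, so the statement cannot be literally correct. Your proposed repair --- replacing the factor $k$ in the second summand by $\kap$ --- gives $(k-\kap)l+\kap((\kap+1)(n-2)+2-l)=kl-\kap(2l-2-(\kap+1)(n-2))$, which is exact on the family $A=[0,n-2]\cup\{l\}$ and is precisely the length of the interval of Theorem~\reft{lev} extended by $\kap(2l-2-(\kap+1)(n-2))$ on one side; this is almost certainly the intended statement (the printed $k$ is a typo for $\kap$). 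Identifying the correct extremal formula is genuine progress, but it is not a proof, and the case $2\kap+1\le k<3\kap$ remains exactly as open as the paper says it is.
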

In fact, Conjecture \refj{lint} is established in \refb{la} in the case where
$k\ge 3\kap$, but the case $2\kap+1\le k<3\kap$, to our knowledge, remains
open.

\vfill

\medskip

\end{document}